\newtheorem{thm}{Theorem}[section]  
\newtheorem{cor}[thm]{Corollary}
\newtheorem{lem}[thm]{Lemma} 
\newtheorem{df-pr}[thm]{Definition-Proposition}
\theoremstyle{definition}
\newtheorem{defn}[thm]{Definition}
\newtheorem{rem}[thm]{Remark}
\newcommand{\CC}{{\mathbb C}}
\newcommand{\ZZ}{{\mathbb Z}}
\newcommand{\PP}{{\mathbb P}}
\newcommand{\sfm }{{\mathsf m }}
\newcommand{\sfs }{{\mathsf s}}
\newcommand{\calL}{{\mathcal L}}
\newcommand{\calO}{{\mathcal O}}
\newcommand{\calP}{{\mathcal P}}
\newcommand{\calQ}{{\mathcal Q}}
\newcommand{\calS}{{\mathcal S}}
\newcommand{\scL}{{\mathscr L}}
\newcommand{\scP}{{\mathscr P}}
\newcommand{\scS}{{\mathscr S}}
\newcommand{\SP}{\calS\calP}
\newcommand{\lan}{{\langle}}
\newcommand{\ran}{{\rangle}}
\newcommand{\inc}{\hookrightarrow}
\newcommand{\Fl}{\operatorname{Fl}}
\newcommand{\pt}{\operatorname{pt}}
\newcommand{\gr}{\operatorname{gr}}
\newcommand{\rk}{{\operatorname{rk}}}
\newcommand{\supp}{{\operatorname{supp}}}
\newcommand{\Pf}{{\operatorname{Pf}}}
\newcommand{\OG}{{\operatorname{OG}}}
\newcommand{\GP}{{\textit{GP}}}
\newcommand{\CK}{{\textit{CK}}}
\newcommand{\CH}{{\textit{CH}}}
\newsavebox{\savepar}
\numberwithin{equation}{section}
\newcounter{labelflag} \setcounter{labelflag}{0}
\newcommand{\labelon}{\setcounter{labelflag}{1}}
\newcommand{\Label}[1]{\ifnum\thelabelflag=1\ifmmode
\makebox[0in][l]{\qquad\fbox{\rm#1}} \else
\marginpar{\vspace{0.7\baselineskip} \hspace{-1.1\textwidth}
\fbox{\rm#1}} \fi \fi \label{#1} } \labelon
\begin{document} 
\title{Pfaffian formula for $K$-theory of odd orthogonal Grassmannians}
\author{Thomas Hudson, Takeshi Ikeda, Tomoo Matsumura, Hiroshi Naruse}
\date{}
\maketitle 
\begin{abstract}
We prove a Pfaffain formula for the $K$-theory class of the degeneracy loci in the bundle of odd maximal orthogonal Grassmannians.
\end{abstract}
\section{Introduction}

 This paper is a sequel to our recent study \cite{HIMN} on the $K$-theoretic degeneracy loci formulas for the vector bundles. In \cite{HIMN}, we proved a determinant formula for Grassmann bundles, and a Pfaffian formula for isotropic Grassmann bundles associated to a symplectic vector bundle. Both formulas live in the $K$-ring of algebraic vector bundles. In this paper, we deal with a vector bundle of odd rank equipped with a non-degenerate symmetric bilinear form. 

Our study is modeled on Kazarian's work \cite{Kazarian} for the Lagrangian and orthogonal degeneracy loci in cohomology. We succeeded in \cite{HIMN} to extend Kazarian's approach to $K$-theory for both ordinary and symplectic vector bundles. The main technical issue in the orthogonal case is that one has to deal with schemes that are not reduced and in particular this means that it is necessary to find a way to recover the fundamental class of the associated reduced scheme. In cohomology, this is achieved by dividing by $2$, so the resulting cohomology classes of Lagrangian and maximal orthogonal degeneracy loci differ only by a multiple of a power of $2$. This reflects the difference between the root systems of type $B$ and $C$. 
We reduce this issue related to the multiplicity to the case of the quadric bundle and deal with it in the appendix.

We can also see this subtlety in the fact that the structure constants for the natural basis formed by the structure sheaves of the Schubert varieties are quite different for Lagrangian and the maximal orthogonal Grassmannian. Such structure constants are known explicitly for the maximal orthogonal Grassmannian by the work of Pechenik and Yong \cite{YongPechenik}, while for the Lagrangian case no conjecture has been proposed.

In the Lagrangian case, the special classes given by the degeneracy loci with only one Schubert condition coincide with the Segre classes of tautological vector bundles. The general degeneracy loci classes are given by a Pfaffian with entries being a quadratic expression in terms of those Segre classes. On the other hand, in the orthogonal case, the special class is given as a series in terms of the Segre classes of the tautological bundles (see Lemma \ref{special}). We introduce classes $\scP_{m}^{(\ell)}$ that can be considered as a deformation of the special ones (see Definition \ref{defPclass}). Our main theorem (Theorem \ref{mainthm}) describes an arbitrary degeneracy loci class as a Pfaffian with entries being a quadratic expression in terms of those classes $\scP_m^{(\ell)}$. If we specialize it to the $K$-theory of orthogonal Grassmannian, it is a Pfaffian formula of Schubert classes given in terms of honest special classes. This is in the spirit of Giambelli \cite{Giambelli}.

Our formula for the orthogonal degeneracy loci does not change the form when we consider an orthogonal Grassmann bundle for higher rank orthogonal vector bundle. Thus we can think of the degeneracy loci class in infinite rank setting. Such universal class should be described by the $\GP$-functions defined by the second and the forth authors in \cite{IkedaNaruse}. The result of this paper implies that those $\GP$-functions should be expressed as a Pfaffian. Further combinatorial implications of our result will be discussed elsewhere.


We expect that our formula can be generalized to the degeneracy loci classes associated to the \emph{vexillary signed permutations} due to Anderson-Fulton. In fact, in \cite{AndersonFulton2} they obtained a Pfaffian formula in cohomology, which we plan to achieve at the level of $K$-theory, using our method.

\section{Basics on connective $K$-theory}
Connective $K$-theory, denoted by $\CK^*$, is an example of oriented cohomology theory built out of the algebraic cobordism introduced by Levine and Morel \cite{LevineMorel}. It is a contravariant functor, together with pushforwards for projective morphisms, satisfying some axioms. We refer to \cite{DaiLevine, Hudson, LevineMorel} for the detailed construction. In this section, we recall some preliminary facts on $\CK^*$, especially regarding Chern classes.

Let $X$ be a smooth quasiprojective variety over the complex numbers $\CC$. The connective $K$-theory of $X$ interpolates between the Grothendieck ring $K(X)$ of algebraic vector bundles on $X$ and the Chow ring $\CH^*(X)$ of $X$. Connective $K$-theory assigns to $X$  a commutative graded algebra $\CK^*(X)$  over the  coefficient ring $\CK^*(\pt)$ which is isomorphic to the polynomial ring $\ZZ[\beta]$ by setting $\beta$ to be the class of degree $-1$ obtained by pushing forward the fundamental class along the structural morphism $\PP^1 \to \pt$. The $\ZZ[\beta]$-algebra $\CK^*(X)$ specializes to the Chow ring $\CH^*(X)$ and the Grothendieck ring $K(X)$ by setting $\beta$ equal to $0$ and $-1$ respectively.  For any closed equidimensional subvariety $Y$ of $X$, there exists an associated fundamental class  $[Y]_{\CK^*}$ in $\CK^*(X)$. In particular,  $[Y]_{\CK^*}$ is specialized to the class $[Y]$ in $\CH^*(X)$ and also to the class of the structure sheaf $\mathcal{O}_Y$ of $Y$ in $K(X)$. In the rest of the paper, we denote the fundamental class of $Y$ in $\CK^*(X)$ by $[Y]$ instead of $[Y]_{\CK^*}$.

As an oriented cohomology theory, connective $K$-theory admits A theory of Chern classes. For line bundles $L_1$ and $L_2$ over $X$, one has first Chern classes $c_1(L_i)\in  \CK^1(X)$ which satisfy
\begin{equation}\label{L tensor L}
c_1(L_1\otimes L_2)=c_1(L_1)+c_1(L_2)+\beta c_1(L_1)c_1(L_2).
\end{equation}
This fundamental law characterizes the theory. Note that the operation
\[
(u,v)\mapsto u\oplus v:= u+v+\beta uv
\]
is an example of commutative one-dimensional formal group law, which is an essential feature of oriented cohomology theories. We should stress here that the sign convention of $\beta$ is opposite from the one used in \cite{DaiLevine, Hudson, LevineMorel}.  

On the other hand, it follows from (\ref{L tensor L}) that $c_1(L^{\vee}) = -c_1(L)/(1+\beta c_1(L))$. Therefore it is convenient to introduce the notation for the \emph{formal inverse}: 
\[
\bar u := \frac{-u}{1+\beta u}.
\]


The main ingredient in our computation is the $K$-theoretic Segre class of vector bundles. Let $E$ be a vector bundle over $X$ of rank $e$. For convenience, we use the following notation to denote the Chern polynomial:
\[
c(E;u):=\sum_{i=0}^{e} c_i(E) u^i.
\]
Let $F$ be another vector bundle over $X$. In \cite{HIMN}, we defined the relative Segre class $\scS_m(E-F)$ for each $m\in \ZZ$ by using the following generating function:
\begin{equation}\label{segre vir}
\scS(E-F;u):=\sum_{m\in \ZZ} \scS_{m}(E-F) u^{m}= \frac{1}{1 + \beta u^{-1}} \frac{c(E - F;\beta)}{c(E-F;-u)},
\end{equation}
where $c(E-F;u):=c(E;u)/c(F;u)$ defines the usual relative Chern classes. It was shown in \cite{HIMN} that $\scS_m(E-F)$ can be also obtained as the pushforward of the product of certain Chern classes as follows.
\begin{lem}\label{lemtensor}
Let $\pi: \PP^*(E)\to X$ be the dual projective bundle of $E$ and $\calQ$ its tautological quotient line bundle. For each integer $s\geq 0$, we have
\begin{equation}\label{push of tensor} 
\pi_*\left(c_1(\calQ)^sc_{f}(\calQ \otimes F^{\vee})\right) = \scS_{s+f-{e}+1}(E-F),
\end{equation}
where $f$ is the rank of $F$.
\end{lem}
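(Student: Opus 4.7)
The plan is to reduce the statement to the known pushforward formula for powers of $\tau := c_1(\calQ)$ by expanding $c_f(\calQ \otimes F^\vee)$ as a polynomial in $\tau$ with coefficients pulled back from $X$, and then to identify the resulting expression as a coefficient of $\scS(E-F;u)$ via the defining generating function (\ref{segre vir}).

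First, by the splitting principle we may formally write Chern roots $y_1,\ldots,y_f$ for $F$, so that $F^\vee \cong \bigoplus_j L_j^\vee$ with $c_1(L_j^\vee) = \bar y_j = -y_j/(1+\beta y_j)$. Each summand $\calQ\otimes L_j^\vee$ is a line bundle, and by the formal group law (\ref{L tensor L}) its first Chern class is
\[
\tau \oplus \bar y_j \;=\; \tau + \bar y_j + \beta\tau\bar y_j \;=\; \frac{\tau - y_j}{1+\beta y_j}.
\]
Taking the product over $j$ and recognizing $\prod_j(1+\beta y_j) = c(F;\beta)$, a symmetric expansion in the $y_j$'s gives
\[
c_f(\calQ\otimes F^\vee) \;=\; \prod_{j=1}^f (\tau \oplus \bar y_j) \;=\; \frac{\prod_{j=1}^f(\tau - y_j)}{c(F;\beta)} \;=\; \frac{1}{c(F;\beta)} \sum_{i=0}^{f} (-1)^{i} c_i(F)\, \tau^{f-i}.
\]

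Next, I multiply by $\tau^s$ and apply $\pi_*$. Since $c(F;\beta)$ and each $c_i(F)$ are pulled back from $X$, the projection formula yields
\[
\pi_*\!\left(\tau^s c_f(\calQ\otimes F^\vee)\right) \;=\; \frac{1}{c(F;\beta)} \sum_{i=0}^{f} (-1)^i c_i(F)\, \pi_*\!\left(\tau^{s+f-i}\right).
\]
At this step I invoke the pushforward identity $\pi_*(\tau^k) = \scS_{k-e+1}(E)$, which is the special case $F = 0$ of the lemma and is available from \cite{HIMN}, where it follows from the standard residue/formal-group pushforward formula on the dual projective bundle.

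Finally, to match the output with $\scS_{s+f-e+1}(E-F)$, I return to the definition (\ref{segre vir}). Writing the ratios of Chern polynomials explicitly gives
\[
\scS(E-F;u) \;=\; \scS(E;u) \cdot \frac{c(F;-u)}{c(F;\beta)},
\]
and since $c(F;-u) = \sum_{i=0}^{f} (-1)^{i} c_i(F)\, u^{i}$, extracting the coefficient of $u^{m}$ with $m = s+f-e+1$ yields precisely
\[
\scS_m(E-F) \;=\; \frac{1}{c(F;\beta)} \sum_{i=0}^{f} (-1)^i c_i(F)\, \scS_{m-i}(E),
\]
which matches the expression computed above. I do not expect a serious obstacle; the only care needed is in handling the formal-group identity for $\tau\oplus\bar y_j$ and in bookkeeping the index shift by $e-1$ inherited from $\pi_*(\tau^k)$.
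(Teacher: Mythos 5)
Your argument is correct, but note that this paper does not actually prove Lemma \ref{lemtensor}: the statement is recalled from \cite{HIMN} without proof, so there is no in-paper argument to match yours against. Your derivation---splitting $F^{\vee}$ into line bundles, using the formal group law to get $c_1(\calQ\otimes L_j^{\vee})=(\tau-y_j)/(1+\beta y_j)$ with $\tau:=c_1(\calQ)$, hence $c_f(\calQ\otimes F^{\vee})=c(F;\beta)^{-1}\sum_{i=0}^{f}(-1)^i c_i(F)\tau^{f-i}$, pushing forward via the projection formula, and matching the result against the factorization $\scS(E-F;u)=\scS(E;u)\,c(F;-u)/c(F;\beta)$ coming from (\ref{segre vir})---is a valid reduction of the relative statement to the single-bundle pushforward $\pi_*(\tau^k)=\scS_{k-e+1}(E)$, and that input is precisely what \cite{HIMN} provides (it is essentially how the Segre classes are characterized there), so the appeal is not circular; the computation is in the same spirit as the one in \cite{HIMN}. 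Two points are worth making explicit: (i) dividing by $c(F;\beta)$ is legitimate because it equals $1$ plus the terms $\beta^i c_i(F)$ with $i\geq 1$, which are nilpotent in $\CK^*(X)$ for $X$ smooth quasiprojective of finite dimension (the same invertibility is already implicit in the paper's definition of $c(E-F;u)$ and of (\ref{segre vir})); and (ii) since $s\geq 0$ and $0\leq i\leq f$, every exponent $s+f-i$ you feed into the $F=0$ case is nonnegative, so you only use that case in the range where it is stated.
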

\section{Maximal Orthogonal Grassmannians of type $B$}
In this section, we first define the degeneracy loci in the odd orthogonal Grassmann bundle. In order to compute its associated class, we construct a resolution of singularities. With the help of Lemma \ref{cor1} on the quadric bundle (proved in the appendix), we express the degeneracy loci class as a pushforward of a product of top Chern classes. With the help of the calculus of formal Laurent series developed in \cite{HIMN}, we finally obtain the Pfaffian formula (Theorem \ref{mainthm}).
\subsection{Degeneracy loci}\label{secKL}
Let  $X$ be a smooth quasiprojective variety. Consider the vector bundle $E$ of rank $2n+1$ over $X$ with a symmetric non-degenerate bilinear form $\lan \ ,\  \ran: E \otimes E \to \calO$ where $\calO$ is the trivial line bundle. Let $\xi: \OG(E) \to X$ be the Grassmann bundle parametrizing rank $n$ isotropic subbundles of $E$, equipped with the tautological bundle $U$. A point of $\OG(E)$ is a pair $(x,U_x)$ of a point $x\in X$ and an isotropic $n$-dimensional subspace of the fiber $E_x$ of $E$ at $x$. Fix a flag of isotropic subbundles of $E$
\[
F^n \subset \cdots \subset F^2 \subset F^1,
\]
where $\rk\  F^i = n-i+1$. Let $F^{-i+1}:=(F^{i})^{\perp}$. Note that the bilinear form $\lan\ ,\ \ran$ on $E$ induces an isomophism  $F^{\perp}/F \otimes F^{\perp}/F \cong \calO$ for any maximal isotropic subbundle $F$ of $E$. This implies that $c_1(F^{\perp}/F)=0$ in  $\CK^*(X)\otimes_{\ZZ}\ZZ[1/2]$.

A strict partition of at most $n$ parts is a sequence $\lambda=(\lambda_1,\dots, \lambda_n)$ of non-negative integers such that $\lambda_i>0$ implies $\lambda_i>\lambda_{i+1}$ for all $i=1,\dots n-1$. Let $\SP(n)$ be the set of such strict partitions $\lambda$ such that $\lambda_1\leq n$. The length of $\lambda$ is the number of nonzero parts. For each partition $\lambda \in \SP(n)$ of length $r$, the corresponding degeneracy loci $X_{\lambda}$ in $\OG(E)$ is defined by 
\[
X_{\lambda} = \{ (x,U_x) \in \OG(E)\ |\ \dim(U_x \cap F^{\lambda_i}_x)\geq i, i=1,\dots,r\}.
\]
\subsection{Resolution of singularities}
Let $\pi: \Fl(F_{\bullet}^{\lambda}) \to \OG(E)$ be the flag bundle associated to $F_{\bullet}^{\lambda}: F^{\lambda_1} \subset \cdots \subset F^{\lambda_r}$ with the tautological flag $D_1\subset \cdots \subset D_r$ with $\rk\  D_i=i$. That is, for each point $p:=(x,U_x) \in \OG(E)$, its fiber along $\pi$ consists of the partial flag $(D_1)_p\subset \cdots \subset (D_r)_p$ of $E_x$ such that $\dim (D_i)_p=i$ and $(D_i)_p \subset F^{\lambda_i}_p$. We can construct the associated flag bundle $\pi: \Fl(F_{\bullet}^{\lambda})  \to \OG(E)$ as a tower of projective bundles
\begin{eqnarray}
&&\Fl(F_{\lambda}^{\bullet})=\PP(F^{\lambda_r}/D_{r-1}) \stackrel{\pi_r}{\longrightarrow} 
\PP(F^{\lambda_{r-1}}/D_{r-2}) \stackrel{\pi_{r-1}}{\longrightarrow} \cdots \ \ \ \ \ \ \ \ \ \nonumber\\\label{P tower}
&&\ \ \ \ \ \ \ \ \ \ \ \ \ \ \ \ \ \ \ \ \ \ \cdots\stackrel{\pi_3}{\longrightarrow} \PP(F^{\lambda_2}/D_1) \stackrel{\pi_2}{\longrightarrow} \PP(F^{\lambda_1})  \stackrel{\pi_1}{\longrightarrow} \OG(E).
\end{eqnarray}
We regard $D_i/D_{i-1}$ as the tautological line bundle of $\PP(F^{\lambda_i}/D_{i-1})$ and denote by $\tau_i$ the first Chern class $c_1((D_i/D_{i-1})^{\vee})$ in $\CK^*(\PP(F^{\lambda_i}/D_{i-1}))$.

Define the sequence of subvarieties $Y_r\subset \cdots \subset Y_1 \subset \Fl(F_{\bullet}^{\lambda})$ by
\[
Y_i = \{ (p, (D_{\bullet})_p) \in \Fl(F_{\bullet}^{\lambda}) \ |\ p=(x,U_x), \ (D_i)_p \subset U_x\}.
\]
\begin{lem}
$Y_i$ is smooth and $Y_r$ is birational to $X_{\lambda}$ through $\pi$. Furthermore, $\pi_*[Y_r] = [X_{\lambda}]$.
\end{lem}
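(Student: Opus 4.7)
The plan is to present each $Y_i$ as a relative odd orthogonal Grassmannian bundle over a smooth base, and then deduce the birationality and the pushforward identity via generic fiber analysis together with a standard property of fundamental classes in $\CK^*$.

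For smoothness, I would introduce the partial flag bundle $Z \to X$ parametrizing tuples $(x, D_1, \dots, D_r)$ with $\dim D_j = j$ and $D_j \subset F^{\lambda_j}_x$, together with its tautological flag $D_\bullet$. This $Z$ is built as a tower of projective bundles entirely analogous to the one exhibiting $\Fl(F_\bullet^\lambda)$ as a tower over $\OG(E)$, but with base $X$ in place of $\OG(E)$, so $Z$ is smooth. There is a natural fiber-product identification $\Fl(F_\bullet^\lambda) \cong \OG(E) \times_X Z$, and I would study the projection $q_i : Y_i \to Z$ obtained by forgetting $U_x$. Its fiber over $(x, D_\bullet)$ is the set of isotropic $U_x \subset E_x$ of rank $n$ containing $D_i$; since $D_i \subset F^{\lambda_i}_x$ is isotropic of rank $i$, the symmetric form on $E_x$ descends to a non-degenerate form on $D_i^\perp/D_i$ of rank $2n-2i+1$, and $U_x \mapsto U_x/D_i$ identifies the fiber with the odd maximal orthogonal Grassmannian $\OG(n-i,\, D_i^\perp/D_i)$. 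Relativizing this over $Z$, I obtain that $Y_i$ is isomorphic to the odd orthogonal Grassmannian bundle of the rank $2n-2i+1$ orthogonal vector bundle $D_i^\perp/D_i$ on $Z$, so $q_i$ is a smooth morphism and $Y_i$ is smooth.

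For the birationality of $\pi|_{Y_r}: Y_r \to X_\lambda$, the inclusion $\pi(Y_r) \subset X_\lambda$ is immediate: any point of $Y_r$ satisfies $D_j \subset U_x \cap F^{\lambda_j}$ with $\dim D_j = j$, whence $\dim(U_x \cap F^{\lambda_j}) \geq j$. Conversely every $(x, U_x) \in X_\lambda$ lifts to $Y_r$, and at a generic point of $X_\lambda$ one has $\dim(U_x \cap F^{\lambda_j}) = j$ for every $j$; using that $\lambda$ is strict so $F^{\lambda_1} \subsetneq \cdots \subsetneq F^{\lambda_r}$, the flag $D_j := U_x \cap F^{\lambda_j}$ is the unique lift. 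Since $Y_r$ is irreducible (an irreducible bundle over the irreducible $Z$, whose fibers are the irreducible odd maximal orthogonal Grassmannians $\OG(n-r, 2(n-r)+1)$), we conclude that $X_\lambda = \pi(Y_r)$ is irreducible and that $\pi|_{Y_r}$ is a proper birational morphism of integral varieties. The identity $\pi_*[Y_r] = [X_\lambda]$ then follows from the general property of fundamental classes in $\CK^*$ that a proper generically finite morphism between integral varieties pushes the fundamental class of the source to the degree of the map times the fundamental class of the image, with degree $1$ in the birational case.

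The main obstacle I anticipate is finding this clean presentation that gives smoothness. A naive induction that would try to present $Y_i$ as a projective subbundle of $\PP(F^{\lambda_i}/D_{i-1})$ above $Y_{i-1}$ breaks down, because the relevant rank $(F^{\lambda_i} \cap U_x)/D_{i-1}$ jumps along $Y_{i-1}$ so no projective bundle structure is available. The remedy is to change base: over $Z$, which remembers the flag $D_\bullet$ but not $U_x$, the orthogonal quotient $D_i^\perp/D_i$ has constant rank $2n-2i+1$, and the odd orthogonal Grassmannian bundle construction then goes through uniformly in $i$. The rest of the argument is essentially formal.
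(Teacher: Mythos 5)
Your smoothness and birationality arguments are essentially the paper's: the paper also identifies $\Fl(F_\bullet^\lambda)$ with the fiber product of $\OG(E)$ and the flag bundle $\Fl(F_\bullet^\lambda)'\to X$ (your $Z$), and exhibits $Y_r$ as a fiber bundle over $\Fl(F_\bullet^\lambda)'$ with fibers the maximal orthogonal Grassmannians $\OG(\CC^{2(n-r)+1})$; your observation that the naive induction over $Y_{i-1}$ fails because $(F^{\lambda_i}\cap U_x)/D_{i-1}$ jumps, and that one must instead fiber over the base remembering only $D_\bullet$, is exactly the right point, and doing it uniformly in $i$ rather than reducing to $i=r$ is a harmless variation. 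The lifting/uniqueness argument for birationality is also fine and at least as detailed as the paper's ``the birationality is clear.''

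The last step, however, contains a genuine gap. The ``general property'' you invoke --- that a proper generically finite morphism of integral varieties pushes the fundamental class forward to the degree times the class of the image --- is a statement about Chow groups (the specialization $\beta=0$) and is \emph{false} in $K$-theory and in $\CK^*$. At $\beta=-1$ the fundamental class of a subvariety becomes the class of its structure sheaf, and the pushforward along $\pi|_{Y_r}$ is the derived pushforward $\sum_i(-1)^i[R^i\pi_*\mathcal{O}_{Y_r}]$; this equals $[\mathcal{O}_{X_\lambda}]$ only under cohomological hypotheses such as $\pi_*\mathcal{O}_{Y_r}=\mathcal{O}_{X_\lambda}$ and vanishing of the higher direct images, i.e.\ essentially that $X_\lambda$ is normal with rational singularities. (Already for a finite degree-$2$ cover of smooth curves one has $\pi_*\mathcal{O}=\mathcal{O}\oplus L^{-1}\neq 2\,\mathcal{O}$ in $K$-theory, and for a resolution of a cone over an elliptic curve the $R^1$ term spoils the birational case.) This is precisely why the paper does not argue by degree: it concludes $\pi_*[Y_r]=[X_\lambda]$ from the fact that $X_\lambda$ has at worst rational singularities, citing the corresponding lemma of \cite{HIMN} (Lemma 4 there), which is the $K$-theoretic substitute for your degree argument. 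To repair your proof you must add this input: quote the normality and rational singularities of the degeneracy loci $X_\lambda$ (a Schubert-variety-type fact) and then apply the birational invariance statement of \cite{HIMN} for fundamental classes in $\CK^*$ under that hypothesis.
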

\begin{proof}
To prove that $Y_i$ is smooth, it suffices to prove it for $i=r$. Since $F^i$'s are vector bundles over $X$, we have the flag bundle $\Fl(F_{\bullet}^{\lambda})' \to X$ associated to the partial flag $F_{\bullet}^{\lambda}$ defined as above.  It is easy to see that $\Fl(F_{\bullet}^{\lambda})=\Fl(F_{\bullet}^{\lambda})'\times \OG(E)$. Let $\xi_1: \Fl(F_{\bullet}^{\lambda}) \to \Fl(F_{\bullet}^{\lambda})'$ be the projection to its first factor. Then $Y_r$ surjects to $\Fl(F_{\bullet}^{\lambda})'$ and each fiber is $\OG((D_r)_p^{\perp}/(D_r)_p)$. Thus we can see that $Y_r$ is a fiber bundle over $\Fl(F_{\bullet}^{\lambda})'$ with the fiber being identified with the maximal orthogonal Grassmannian $\OG(\CC^{2(n-r)+1})$ of $\CC^{2(n-r)+1}$. Thus it is smooth. The birationality is clear. The last claim follows from the fact that $X_{\lambda}$ has at worst rational singularities (\textit{cf.} \cite[Lemma 4]{HIMN}). 
\end{proof}
\begin{lem}\label{lemY_r}
Let 
$\kappa:=c_1(U^{\perp}/U)$. In $\CK^*(\Fl(F^{\lambda}_{\bullet}))$, we have
\[
\left(\prod_{i=1}^r (2 + \beta(\tau_i\oplus \kappa) )\right)[Y_r] = \prod_{i=1}^r c_{n-i+1}((D_i/D_{i-1})^{\vee}\otimes D_{i-1}^{\perp}/U^{\perp}).
\]
\end{lem}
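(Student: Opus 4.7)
The plan is to prove the formula by induction on the length $r$ of $\lambda$, with the case $r=0$ being vacuous. For the inductive step, I first observe that the subvariety $Y_{r-1}$ depends only on $D_1\subset\cdots\subset D_{r-1}$, so $Y_{r-1}\subset \Fl(F^\lambda_\bullet)$ is the scheme-theoretic preimage under the projective bundle $\pi_r$ of its analogue in $\Fl(F^{\lambda'}_\bullet)$, where $\lambda'=(\lambda_1,\ldots,\lambda_{r-1})$. Pulling back the inductive identity on $\Fl(F^{\lambda'}_\bullet)$ (flatness of $\pi_r$) and multiplying both sides by $c_{n-r+1}((D_r/D_{r-1})^\vee\otimes D_{r-1}^\perp/U^\perp)$, the lemma reduces to the single-step identity
\[
c_{n-r+1}\bigl((D_r/D_{r-1})^\vee\otimes D_{r-1}^\perp/U^\perp\bigr)\cdot [Y_{r-1}] = \bigl(2+\beta(\tau_r\oplus\kappa)\bigr)\cdot[Y_r]
\]
in $\CK^*(\Fl(F^\lambda_\bullet))$.

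To prove this identity I would restrict attention to the smooth subvariety $Y_{r-1}$, on which $D_{r-1}\subset U$ holds as an honest inclusion of subbundles. The quotient $D_{r-1}^\perp/D_{r-1}$ then inherits a nondegenerate quadratic form of odd rank $2(n-r+1)+1$, with $U/D_{r-1}$ as a maximal isotropic subbundle and $U^\perp/U$ as the anisotropic line; the induced pairing produces a canonical isomorphism $D_{r-1}^\perp/U^\perp \cong (U/D_{r-1})^\vee$. The natural composition
\[
\sigma\colon D_r/D_{r-1} \hookrightarrow D_{r-1}^\perp/D_{r-1} \twoheadrightarrow D_{r-1}^\perp/U^\perp
\]
is a section of the rank $n-r+1$ bundle $(D_r/D_{r-1})^\vee\otimes D_{r-1}^\perp/U^\perp$, so its top Chern class equals the class of the zero scheme $Z(\sigma)=\{D_r\subset U^\perp\}\cap Y_{r-1}$.

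The crux is the scheme-theoretic analysis of $Z(\sigma)$. A local argument using that $D_r$ is automatically isotropic (since $D_r\subset F^{\lambda_r}$, which is isotropic) while the form on the line bundle $U^\perp/U$ is nondegenerate shows that $Z(\sigma)=Y_r$ as sets; any vector lying in $U^\perp$ but not in $U$ would project to a nonzero, hence anisotropic element of $U^\perp/U$, contradicting isotropy. Scheme-theoretically, however, $Z(\sigma)$ carries a non-reduced structure along $Y_r$ whose $K$-theoretic multiplicity must be identified. This is exactly what Lemma \ref{cor1} of the appendix accomplishes for the quadric bundle $\PP(D_{r-1}^\perp/D_{r-1})$, yielding the multiplicity factor $2+\beta(\tau_r\oplus\kappa)=2+\beta c_1\bigl((D_r/D_{r-1})^\vee\otimes(U^\perp/U)\bigr)$. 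The main obstacle is precisely this multiplicity computation: in cohomology the type $B$/$C$ discrepancy manifests as a bare factor of $2$, but in connective $K$-theory the excess intersection with the anisotropic line $U^\perp/U$ produces a genuine $\beta$-dependent correction, which is why this part is isolated in the appendix.
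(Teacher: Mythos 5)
Your argument is correct and follows essentially the same route as the paper: you peel off one stage of the tower at a time and settle the single-step identity by viewing $D_r/D_{r-1}$ as an isotropic line in $D_{r-1}^{\perp}/D_{r-1}$ over $Y_{r-1}$, invoking the appendix Lemma~\ref{cor1} on the quadric bundle $Q(D_{r-1}^{\perp}/D_{r-1})$ to account for the non-reduced multiplicity factor $2+\beta(\tau_r\oplus\kappa)$, exactly as the paper does by pulling back along the section $a_r$. Your induction with flat pullback is just a repackaging of the paper's iterated projection formula, so there is no substantive difference.
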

\begin{proof}
Consider the vector bundle $D_{i-1}^{\perp}/D_{i-1}$ over $Y_{i-1}$ with the induced bilinear form. Let $Q(D_{i-1}^{\perp}/D_{i-1})$ be the corresponding quadric bundle over $Y_{i-1}$ (see Section \ref{appendix}). Let $S_i$ be the tautological line bundle of $Q(D_{i-1}^{\perp}/D_{i-1})$ and consider the maximal isotropic subbundle $U/D_{i-1}$ of $D_{i-1}^{\perp}/D_{i-1}$. We apply Lemma \ref{cor1} and obtain the following identity in $\CK^*(Q(D_{i-1}^{\perp}/D_{i-1}))$:
\begin{equation}\label{eqi-1}
(2+\beta c _1(S_i^{\vee}\otimes U^{\perp}/ U)) [\PP(U/D_{i-1})] = c_{n-i+1}(S_i^{\vee}\otimes D_{i-1}^{\perp}/U^{\perp}).
\end{equation}
The line bundle $D_i/D_{i-1} \to Y_{i-1}$ defines a section $a_i: Y_{i-1} \to Q(D_{i-1}^{\perp}/D_{i-1})$ by sending a point $y\in Y_{i-1}$ to its fiber of $D_i/D_{i-1}$ which is isotropic since $D_i \subset F^{\lambda_i}$.  The pullback of $S_i$ along $a_i$ coincides with $D_i/D_{i-1}$. Furthermore $a_i^*[\PP(U/D_{i-1})]=[Y_i]$. Thus by pulling back (\ref{eqi-1}) along $a_i$, we obtain the following identity in $\CK^*(Y_{i-1})$:
\begin{eqnarray*}
(2+\beta (\tau_i\oplus \kappa))[Y_i] &=& c_{n-i+1}((D_i/D_{i-1})^{\vee}\otimes D_{i-1}^{\perp}/U^{\perp}).
\end{eqnarray*}
The claim follows by the projection formula applied to the inclusions $Y_{i-1} \inc Y_i$.
\end{proof}
The next corollary  is an obvious consequence of Lemma \ref{lemY_r} and the fact that $\kappa=c_1(U^{\perp}/U)=0$ in $\CK^*(\OG(E))\otimes_{\ZZ}\ZZ[1/2]$.
\begin{cor}\label{corXlambda}
In $\CK^*(\OG(E))\otimes_{\ZZ}\ZZ[1/2]$, we have
\[
[X_{\lambda}] = \pi_* \left(\prod_{i=1}^r\frac{ c_{n-i+1}((D_i/D_{i-1})^{\vee}\otimes D_{i-1}^{\perp}/U^{\perp})}{2 + \beta\tau_i}\right).
\]
\end{cor}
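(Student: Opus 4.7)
The strategy is to start from the identity in Lemma \ref{lemY_r}, show that the prefactor on its left-hand side simplifies to $\prod_{i=1}^r(2+\beta\tau_i)$ once one kills the class $\kappa$, then invert this prefactor and push forward by $\pi$.

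The first step is to eliminate $\kappa$. On $\OG(E)$, the non-degenerate bilinear form induces an isomorphism $U^\perp/U\otimes U^\perp/U\cong\calO$ of line bundles (note that $U^\perp/U$ has rank one), and by the formal group law this yields $\kappa\oplus\kappa=\kappa(2+\beta\kappa)=0$. Since $\kappa$ is nilpotent, $2+\beta\kappa$ is a unit after inverting $2$, forcing $\kappa=0$ in $\CK^*(\OG(E))\otimes_{\ZZ}\ZZ[1/2]$; this is exactly the vanishing already recorded in Section \ref{secKL}. Pulling back along $\pi$, the sum $\tau_i\oplus\kappa$ collapses to $\tau_i$, so Lemma \ref{lemY_r} becomes
\[
\prod_{i=1}^r(2+\beta\tau_i)\cdot[Y_r]=\prod_{i=1}^r c_{n-i+1}\bigl((D_i/D_{i-1})^{\vee}\otimes D_{i-1}^{\perp}/U^{\perp}\bigr)
\]
in $\CK^*(\Fl(F_\bullet^\lambda))\otimes_{\ZZ}\ZZ[1/2]$.

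The second step is to invert the factors $2+\beta\tau_i$. Since $\tau_i=c_1((D_i/D_{i-1})^\vee)$ is a first Chern class of a tautological line bundle on a projective bundle in the tower (\ref{P tower}), it is nilpotent in $\CK^*(\Fl(F_\bullet^\lambda))$; hence $2+\beta\tau_i$ is a unit once $2$ is inverted, with inverse expressible as a finite geometric series in $\tau_i$. Dividing through by $\prod_{i=1}^r(2+\beta\tau_i)$ yields a closed-form expression for $[Y_r]$, and applying $\pi_*$ together with the identity $\pi_*[Y_r]=[X_\lambda]$ from the preceding lemma delivers the stated formula.

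There is no genuine obstacle here; the corollary is essentially a bookkeeping consequence. The only two points requiring attention are the invertibility of $2+\beta\tau_i$, which rests on nilpotency of $\tau_i$ in the tower of projective bundles, and the passage from $\kappa(2+\beta\kappa)=0$ to $\kappa=0$, which is precisely where the coefficient ring $\ZZ[1/2]$ enters.
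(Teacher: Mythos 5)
Your proposal is correct and follows exactly the route the paper intends: the paper dismisses this corollary as an immediate consequence of Lemma \ref{lemY_r} together with the vanishing $\kappa=c_1(U^{\perp}/U)=0$ in $\CK^*(\OG(E))\otimes_{\ZZ}\ZZ[1/2]$ already noted in Section \ref{secKL}, and then divides by the units $2+\beta\tau_i$ and applies $\pi_*[Y_r]=[X_\lambda]$. You merely spell out the details the paper leaves implicit (the formal-group-law argument for $\kappa=0$ and the nilpotency of $\tau_i$ making $2+\beta\tau_i$ invertible after inverting $2$), which is fine.
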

\emph{In the rest of the paper, we work in $\CK^*(\OG(E))\otimes_{\ZZ}\ZZ[1/2]$.}
\subsection{Special Schubert classes}
Let us now  introduce the classes $\scP_m^{(\ell)}$, which will be used in the main theorem to describe the fundamental class of a general degeneracy loci.
\begin{defn}\label{defPclass}
For each $m\in \ZZ$ and $\ell=1,\dots, n$, we define the classes $\scP_m^{(\ell)}$ by the following generating function
\begin{equation*}
\sum_{m\in \ZZ}\scP_m^{(\ell)} u^m = \frac{1}{2+ \beta u^{-1}} \scS((U^{\perp}- E/F^{\ell})^{\vee};u).
\end{equation*}
Or equivalently, we define
\begin{equation}\label{eqP}
\scP_m^{(\ell)}=\frac{1}{2} \sum_{s\geq 0} \left(-\frac{\beta}{2}\right)^s\scS_{s+m}((U^{\perp}- E/F^{\ell})^{\vee}).
\end{equation}
\end{defn}
For each integer $k=1,\dots,n$, let $\lambda=(k) \in \SP(n)$ be the strict partition with only one part. The corresponding degeneracy loci is denoted by $X_k$ and its associated class $[X_k]$ is called a \emph{special class}. 
The next lemma shows that we can view $\scP_k^{(\ell)}$ as a deformation of the special class $[X_k]$. 
\begin{lem}\label{special} 
We have $[X_{k}]=\scP_{k}^{(k)}$.
\end{lem}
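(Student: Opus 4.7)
The plan is to apply Corollary \ref{corXlambda} to the partition $\lambda=(k)$, rewrite the resulting pushforward via Lemma \ref{lemtensor}, and then match the outcome against the defining series (\ref{eqP}) for $\scP_k^{(k)}$ by exploiting the duality induced by the bilinear form on $E$.

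Setting $r=1$, $\lambda_1=k$, and noting $D_0=0$ (so $D_0^{\perp}/U^{\perp}=E/U^{\perp}$), Corollary \ref{corXlambda} gives
\[
[X_k] \;=\; \pi_*\!\left( \frac{c_n\bigl(D_1^{\vee}\otimes E/U^{\perp}\bigr)}{2+\beta\tau_1} \right),
\]
where $\pi:\PP(F^k)\to\OG(E)$, $D_1\subset F^k$ is the tautological line subbundle, and $\tau_1=c_1(D_1^{\vee})$. I would expand $1/(2+\beta\tau_1)=\tfrac{1}{2}\sum_{s\geq 0}(-\beta/2)^s\tau_1^s$ and identify $\PP(F^k)=\PP^*((F^k)^{\vee})$, so that the tautological quotient line bundle is $\calQ=D_1^{\vee}$ with $c_1(\calQ)=\tau_1$. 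Lemma \ref{lemtensor}, applied with the ambient bundle $(F^k)^{\vee}$ of rank $n-k+1$ and $F=(E/U^{\perp})^{\vee}$ of rank $n$, then yields
\[
\pi_*\bigl(\tau_1^{s}\cdot c_n\bigl(D_1^{\vee}\otimes E/U^{\perp}\bigr)\bigr) \;=\; \scS_{s+k}\bigl((F^k)^{\vee}-(E/U^{\perp})^{\vee}\bigr),
\]
so that $[X_k]=\tfrac{1}{2}\sum_{s\geq 0}(-\beta/2)^s\scS_{s+k}((F^k)^{\vee}-(E/U^{\perp})^{\vee})$.

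The remaining step---which I expect to be the key point---is to identify this series with (\ref{eqP}) for $\scP_k^{(k)}$, i.e., to establish
\[
\scS_m\bigl((F^k)^{\vee}-(E/U^{\perp})^{\vee}\bigr) \;=\; \scS_m\bigl((U^{\perp}-E/F^k)^{\vee}\bigr).
\]
By (\ref{segre vir}), both sides are determined by the Chern polynomial of the respective virtual bundle. The non-degenerate form on $E$ furnishes isomorphisms $(U^{\perp})^{\vee}\cong E/U$, $(E/U^{\perp})^{\vee}\cong U$, $(F^k)^{\vee}\cong E/(F^k)^{\perp}$, and $(E/F^k)^{\vee}\cong (F^k)^{\perp}$. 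Substituting, both Chern polynomials reduce to $c(E;t)/\bigl(c(U;t)\,c((F^k)^{\perp};t)\bigr)$, giving the required equality termwise. This matches $[X_k]$ with $\scP_k^{(k)}$ and completes the proof.
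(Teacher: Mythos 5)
Your proposal is correct and follows essentially the same route as the paper: express $[X_k]$ via Corollary \ref{corXlambda} with $r=1$, expand $1/(2+\beta\tau_1)$, and push forward using Lemma \ref{lemtensor}. The only difference is that you spell out the step the paper leaves implicit in ``the claim follows from Lemma \ref{lemtensor}'', namely the identification $\scS_m((F^k)^{\vee}-(E/U^{\perp})^{\vee})=\scS_m((U^{\perp}-E/F^k)^{\vee})$ via the duality isomorphisms coming from the nondegenerate form, and your verification of that identity is accurate.
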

\begin{proof}
As in Section \ref{secKL}, we consider $\pi: \PP(F^{k}) \to \OG(E)$ and $Y_1 \subset \PP(F^{k})$ which is the loci where $D_1$ is contained in $U$. By Lemma \ref{lemY_r}, we have
\[
(2 + \beta(\tau_1\oplus \kappa) [Y_1] = c_{n}(D_1^{\vee}\otimes E/U^{\perp}).
\]
As in Corollary \ref{corXlambda}, we get
\[
[X_{k}] = \pi_* \left(\frac{ c_{n}(D_1^{\vee}\otimes E/U^{\perp})}{2 + \beta\tau_1}\right)=
\frac{1}{2} \sum_{s\geq 0} \left(-\frac{\beta}{2}\right)^s \pi_*\left(\tau_1^sc_n(D_1^{\vee}\otimes E/U^{\perp})\right)
\]
in $\CK^*(\OG(E))\otimes_{\ZZ}\ZZ[1/2]$. Now the claim follows from Lemma \ref{lemtensor}.
\end{proof}

\subsection{Computing $[X_{\lambda}]$}\label{secPFthm}
First, we recall notations from \cite{HIMN} for the ring of certain formal Laurent series, necessary for the computation of the class $[X_{\lambda}]$. Let $R=\oplus_{m\in \ZZ}R_m$ be a commutative $\ZZ$-graded ring. Let  $t_1,\ldots,t_r$ be indeterminates with $\deg(t_i)=1$. For ${\sfs}=(s_1,\ldots,s_r)\in \ZZ^r$, we denote $t^{\sfs}=t_1^{s_1}\cdots t_r^{s_r}$. A formal Laurent series of degree $m$ in the variables $t_1,\ldots,t_r$ with coefficients in $R$ is given by
\[
f(t)=\sum_{\sfs\in \ZZ^r}a_{\sfs}t^{\sfs},
\]
where $a_{\sfs}\in R_{m-|{\sfs}|}$ for all ${\sfs}\in \ZZ^r$ with $|{\sfs}|=\sum_{i=1}^rs_i$. Its support $\supp f$ is defined as
\[
\supp f = \{ \sfs \in \ZZ^r \ |\ a_{\sfs} \not=0\}.
\]
For each $m\in \ZZ$, let $\scL_m^R$ denote the set of  all formal Laurent series $f(t)$ of degree $m$ such that there exists $\sfm \in \ZZ^r$ such that $\sfm + \supp f$ is contained in the cone $C \subset \ZZ^r$ defined by $s_1\geq0, \; s_1+s_2\geq 0, \;\cdots, \; s_1+\cdots + s_{r} \geq 0$. The direct sum $\scL^R:=\oplus_{m\in \ZZ}\scL_m^R$ is a graded ring in an obvious manner. For each $i=1,\ldots,r,$ let $\scL^{R,i}$ denote the subring of $\scL^R$ consisting of series that do not contain negative powers of $t_1,\ldots,t_{i-1}.$ In particular we have $\scL^{R,1}=\scL^R$. For each $m\in \ZZ$, let $R[[t_1,\ldots,t_r]]_m$ denote the set of formal power series in $t_1,\ldots,t_r$ of homogeneous degree $m$.   We define  the ring $R[[t_1,\ldots,t_r]]_{\gr}$ of graded formal power series to be $\oplus_{m\in \ZZ}R[[t_1,\ldots,t_r]]_m$. Note that $\scL^{R,i}$ is a graded $R[[t_1,\ldots,t_r]]_{\gr}$-module. 

Let us apply the above notation to $R=\CK^*(\mathrm{OG}(E))$.  We define an $R[[t_1,\ldots,t_{i-1}]]_{\gr}$-module structure on $\CK^*(\mathbb{P}(F^{\lambda_{i-1}}/D_{i-2}))$ by 
\[
f(t_1,\ldots,t_{i-1})\alpha:=f(\tau_1,\ldots,\tau_{i-1})\alpha,
\]
for each $f(t_1,\ldots,t_{i-1})\in R[[t_1,\ldots,t_{i-1}]]_{\gr}$ and  $\alpha\in \CK^*(\mathbb{P}(F^{\lambda_{i-1}}/D_{i-2}))$ where $\tau_i = c_1((D_i/D_{i-1})^{\vee})$ as before. We can uniquely define a homomorphism $\phi_i:\scL^{R,i} \to \CK^*(\mathbb{P}(F^{\lambda_{i-1}}/D_{i-2}))$ of  graded $R[[t_1,\ldots,t_{i-1}]]_{\gr}$-modules by setting 
\[
t_{i}^{s_i}\cdots t_r^{s_r}\mapsto  \scS_{s_i}((U^\perp -E/F^{\lambda_i})^\vee) \cdots  \scS_{s_r}((U^\perp -E/F^{\lambda_r})^\vee),
\]
for each $s_i,\dots, s_r \in \ZZ$. Note that for $m\in \ZZ$ and $j\geq i$, we have 
\begin{equation}\label{remP}
\phi_i\left(\frac{t_j^{m}}{2+\beta t_j}\right) =\scP_m^{(\lambda_j)}.
\end{equation}
\begin{thm}\label{mainPF} 
Let $\lambda \in \SP(n)$ of length $r$. In $\CK^*(\OG(E))\otimes_{\ZZ}\ZZ[1/2]$, we have
\begin{eqnarray*}
[X_{\lambda}]
&=& \phi_1\left(\prod_{i=1}^r\frac{t_i^{\lambda_i}}{2+\beta t_i} \prod_{1\leq i<j\leq r } \left(\frac{1-\bar t_i/\bar t_j}{1+\bar t_i/ t_j}\right)\right),
\end{eqnarray*}
where $\bar t = \frac{-t}{1+\beta t}$ is the notation for the formal inverse as before.
\end{thm}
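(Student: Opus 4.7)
My plan is to start from Corollary \ref{corXlambda}, which presents $[X_\lambda]$ as the pushforward, along the tower of projective bundles (\ref{P tower}), of the expression $\prod_{i=1}^r \frac{c_{n-i+1}((D_i/D_{i-1})^\vee \otimes D_{i-1}^\perp/U^\perp)}{2 + \beta \tau_i}$, and to compute this pushforward in stages $\pi_{r*}, \pi_{r-1*}, \ldots, \pi_{1*}$, applying Lemma \ref{lemtensor} at each stage. The aim is to show that the iterated pushforward realizes the action of the operator $\phi_1$ on the formal Laurent series asserted in the theorem, converting each $\tau_i$ after the $i$-th push into the formal variable $t_i$ and ultimately into the appropriate Segre classes $\scS_*((U^\perp - E/F^{\lambda_i})^\vee)$.

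The preparatory step is to rewrite the bundle $D_{i-1}^\perp/U^\perp$. Since the bilinear form on $E$ yields the exact sequence $0 \to D_{i-1}^\perp \to E \to D_{i-1}^\vee \to 0$ and hence the equality $D_{i-1}^\perp/U^\perp = E/U^\perp - D_{i-1}^\vee$ in K-theory, filtering $D_{i-1}$ by the line bundles $D_j/D_{j-1}$ (with $c_1((D_j/D_{j-1})^\vee) = \tau_j$) gives the generating-function identity $c((D_i/D_{i-1})^\vee \otimes D_{i-1}^\perp/U^\perp; u) = c((D_i/D_{i-1})^\vee \otimes E/U^\perp; u) / \prod_{j<i}(1 + u(\tau_i \oplus \tau_j))$. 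Substituting this for the top Chern class and combining with $1/(2+\beta\tau_i)$ separates the integrand into a diagonal piece depending only on $\tau_i$ and $E/U^\perp$ and a cross piece encoding the interaction with the earlier $\tau_j$. Then at each step, Lemma \ref{lemtensor} is applied to $\pi_i : \PP(F^{\lambda_i}/D_{i-1}) \to \PP(F^{\lambda_{i-1}}/D_{i-2})$ with $\calQ = (D_i/D_{i-1})^\vee$: expanding $1/(2+\beta\tau_i)$ and the cross denominators as formal power series in $\tau_i$, each monomial $\tau_i^s \cdot c_{n-i+1}(\calQ \otimes E/U^\perp)$ pushes forward to a Segre class whose degree is shifted by $\lambda_i$ thanks to the rank shift $s + f - e + 1$ in Lemma \ref{lemtensor} (here $\rk F^{\lambda_i} = n - \lambda_i + 1$). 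Iterating from $i = r$ down to $i = 1$ should produce $\phi_1$ applied to $\prod_i \frac{t_i^{\lambda_i}}{2+\beta t_i}$ times an accumulated cross-term factor.

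The main obstacle is identifying this accumulated cross-term factor with $\prod_{i<j}(1-\bar t_i/\bar t_j)/(1+\bar t_i/t_j)$. This reduces to verifying that the reciprocal of the $\tau_i \oplus \tau_j$ factor from the generating-function identity above, once combined with the Segre-class normalization $1/(1+\beta u^{-1})$ appearing in (\ref{segre vir}) and translated to the $t$-variables, matches the claimed ratio. Using $\bar t = -t/(1+\beta t)$ and the identity $(1+\beta t_i)(1+\beta t_j) = 1 + \beta (t_i \oplus t_j)$, a short direct computation yields $(1-\bar t_i/\bar t_j)/(1+\bar t_i/t_j) = (t_j - t_i)/(t_j - t_i + \beta t_i t_j)$, which is the needed reciprocal. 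Keeping track of these denominators and the various powers of $(1+\beta t_i)$ cleanly is where the calculus of formal Laurent series from \cite{HIMN} does the heavy lifting: the module structure of $\scL^{R,i}$ over $R[[t_1,\ldots,t_{i-1}]]_{\gr}$ ensures that at the $i$-th induction step only the $t_i$-dependence is pushed forward, with the $t_j$ for $j<i$ treated as constants, matching the operator-theoretic definition of $\phi_1$ and closing the induction on $i$.
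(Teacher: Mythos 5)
Your overall strategy is the paper's own: push forward stage by stage along the tower (\ref{P tower}), rewrite $D_{i-1}^{\perp}/U^{\perp}=E/U^{\perp}-D_{i-1}^{\vee}$, separate a diagonal factor from cross factors indexed by the earlier variables, and let the formal Laurent calculus and $\phi_i$ do the bookkeeping; this is exactly what the paper packages into the per-stage identity (\ref{lempiphi}) by quoting Propositions 1 and 3 of \cite{HIMN}. The genuine gap is at the step you yourself call the main obstacle: the identification of the accumulated cross factor. You verify the algebraic identity $(1-\bar t_i/\bar t_j)/(1+\bar t_i/t_j)=(t_j-t_i)/(t_j-t_i+\beta t_i t_j)$ (which is correct) and then simply assert it is ``the needed reciprocal'', but you never derive it from the pushforward, and it is not what the pushforward produces. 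Carrying out your own preparatory step, the contribution of $D_{i-1}-D_{i-1}^{\vee}$ multiplies the Segre generating function (\ref{segre vir}) by $c(D_{i-1}-D_{i-1}^{\vee};\beta)/c(D_{i-1}-D_{i-1}^{\vee};-u)$, and after pushing forward (so that the current variable enters through $u\mapsto t_j^{-1}$ and the earlier Chern roots become $t_i$, $i<j$) each pair $i<j$ contributes
\[
\frac{1}{(1+\beta t_i)^2}\cdot\frac{1-t_i/t_j}{1-\bar t_i/t_j}
\;=\;\frac{t_j-t_i}{(1+\beta t_i)\,(t_i+t_j+\beta t_i t_j)}
\;=\;\frac{1-\bar t_i/\bar t_j}{1-t_i/\bar t_j},
\]
which is precisely the factor in (\ref{lempiphi}), not your $(t_j-t_i)/(t_j-t_i+\beta t_i t_j)$. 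The two differ already at $\beta=0$: the former specializes to the classical kernel $(t_j-t_i)/(t_j+t_i)$, while yours specializes to $1$, which would make $[X_{\lambda}]$ a product of special classes in cohomology --- false already for $\lambda=(2,1)$. No argument is offered (and none is available) that the two series have the same image under $\phi_1$, so the crux of the proof is missing.

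A clarifying remark on why you were led into this corner: the kernel printed in the statement of Theorem \ref{mainPF} (denominator $1+\bar t_i/t_j$) is inconsistent with the paper's own proof; iterating (\ref{lempiphi}) yields the denominator $1-t_i/\bar t_j$, which is also the kernel used to define the coefficients $\gamma^{ij}_{ab}$ entering the Pfaffian of Theorem \ref{mainthm}, so the printed denominator should be read as a typo for $1-t_i/\bar t_j$. A correct argument must actually establish the per-stage pushforward identity --- either by citing \cite{HIMN} as the paper does, or by extending Lemma \ref{lemtensor} to the virtual bundle $D_{i-1}^{\perp}/U^{\perp}$ and assembling the resulting sum over $c_p(D_{i-1}-D_{i-1}^{\vee})$ into the factor above --- rather than reverse-engineering the kernel from the printed statement.
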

\begin{proof}
As in \cite[Proposition 1]{HIMN}, the definition (\ref{segre vir}) and the property (\ref{push of tensor}) of the relative Segre classes imply that, for $\pi_i : \PP(F^{\lambda_i}/D_{i-1}) \to \PP(F^{\lambda_{i-1}}/D_{i-2})$, we have
\[
\pi_{i*}(\tau_i^s c_{n-i+1}((D_i/D_{i-1})^{\vee} \otimes D_{i-1}^{\perp}/U^{\perp})) = \sum_{p=0}^{\infty} c_p(D_{i-1} - D_{i-1}^{\vee}) \sum_{q=0}^p \binom{p}{q}\beta^{q}
\scS_{\lambda_i+s - p + q}((U^{\perp} - E/F^{\lambda_i})^{\vee})
\]
for each integer $s\geq 0$. Therefore by the same computation used in the proof of \cite[Proposition 3]{HIMN}, we obtain
\begin{equation}\label{lempiphi} 
\pi_{i*}\left(\frac{c_{n-i+1}((D_i/D_{i-1})^{\vee} \otimes D_{i-1}^{\perp}/U^{\perp})}{2+\beta \tau_i}\right)  = \phi_i\left( \frac{t_i^{\lambda_i}}{2+\beta t_i}\prod_{j=1}^{i-1}\frac{ 1-\bar t_j/\bar t_i}{1 -  t_j/\bar t_i}\right).
\end{equation}
Now the claim follows by repeatedly applying (\ref{lempiphi}) to Corollary \ref{corXlambda}. 
\end{proof}
\subsection{Main theorem}
Let $\lambda \in \SP(n)$ of length $r$. Let $2m$ be the smallest even integer such that $r \leq 2m$. If $2m>r$, then we set $\lambda_{r+1}=0$. For each $i,j$ such that $1\leq i<j \leq 2m$, we expand the following rational function in $\calL^R$:
\begin{equation}
\left(1 + \beta \bar t_i\right)^{2m-i-1}  \left(1 + \beta \bar t_j\right)^{2m-j} \frac{1 - \bar  t_i/\bar t_j}{1 - t_i/\bar  t_j } = \sum_{a,b \in \ZZ \atop{a \geq  0, a+b \geq 0}} \gamma_{a,b}^{ij} t_i^{a} t_j^{b}.
\end{equation}
Note that $\gamma_{ab}^{ij}\in \ZZ[\beta]$. Note that if $j=2m$, $\gamma^{ij}_{ab}=0$ for all $b>0$. Let $\scP_{m}^{(0)} := (-\beta)^{-m}$ for all $m \in \ZZ_{\leq 0}$. If $A$ is a skewsymmetric $2m \times 2m$ matrix, we denote the Pfaffian of $A$ by $\Pf(A)$. 
We are now ready to state and prove our main result.
\begin{thm}\label{mainthm}
The fundamental class of the degeneracy loci $X_{\lambda}$ is given by 
\begin{equation}\label{PfB}
[X_{\lambda}] = \Pf 
\left( 
\sum_{a,b \in \ZZ\atop{a \geq  0, a+b \geq 0}}\gamma_{ab}^{ij}\scP_{\lambda_i+a}^{(\lambda_i)} \scP_{\lambda_j+b}^{(\lambda_j)}
\right)_{1 \leq i < j \leq 2m}.
\end{equation}
\end{thm}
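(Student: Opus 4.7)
The plan is to start from Theorem~\ref{mainPF} and convert the product $\prod_{1\leq i<j\leq r}$ appearing inside $\phi_1$ into a Pfaffian via a K-theoretic Schur-type identity, then push $\phi_1$ through the Pfaffian to recover the $\scP$-classes.

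When $r<2m$, the first step is to zero-pad: set $\lambda_{r+1}=\cdots=\lambda_{2m}=0$ and adjoin new variables $t_{r+1},\ldots,t_{2m}$. The convention $\scP_m^{(0)}:=(-\beta)^{-m}$ should encode the formal specialization $\phi(t^{\ell})=(-\beta)^{-\ell}$ on padded variables: a short computation using $\frac{1}{2+\beta t}=\frac{1}{2}\sum_{s\geq 0}(-\beta t/2)^s$ verifies that $\phi(t^m/(2+\beta t))=(-\beta)^{-m}$ under this specialization. With this bookkeeping in place, Theorem~\ref{mainPF} extends to $2m$ variables and still computes $[X_\lambda]$, so the task reduces to converting the product over $1\leq i<j\leq 2m$ into a Pfaffian.

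Next, I would invoke a K-theoretic Schur-Pfaffian identity rewriting the extended product as the Pfaffian of the matrix whose $(i,j)$-entry is $(1+\beta\bar t_i)^{2m-i-1}(1+\beta\bar t_j)^{2m-j}\frac{1-\bar t_i/\bar t_j}{1-t_i/\bar t_j}$. The correction factors $(1+\beta\bar t_i)^{2m-i-1}(1+\beta\bar t_j)^{2m-j}$ are precisely what is needed to transform the asymmetric kernel $\frac{1-\bar t_i/\bar t_j}{1+\bar t_i/t_j}$ appearing in Theorem~\ref{mainPF} into the quasi-skew-symmetric kernel $\frac{1-\bar t_i/\bar t_j}{1-t_i/\bar t_j}$; their index-dependent exponents form the K-theoretic signature absent from the classical Schur identity $\Pf[(x_j-x_i)/(x_j+x_i)]=\prod_{i<j}(x_j-x_i)/(x_j+x_i)$. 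I would establish the identity by induction on $m$ using a Laplace-type row expansion of the Pfaffian together with the formal-group-law relations between $t$ and $\bar t$.

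Finally, I would apply $\phi_1$ to both sides. Since $\phi_1$ sends $\prod_k t_k^{s_k}$ to $\prod_k\scS_{s_k}((U^\perp-E/F^{\lambda_k})^\vee)$, it factorizes on products of functions of disjoint variables; and since each summand in the Pfaffian expansion over perfect matchings of $\{1,\ldots,2m\}$ contains every variable $t_k$ in exactly one factor, one obtains the exchange identity $\phi_1(\Pf[M_{ij}])=\Pf[\phi_1(M_{ij})]$. Expanding the $(i,j)$-entry in powers of $t_i,t_j$, multiplying by the single-variable factors $\frac{t_i^{\lambda_i}}{2+\beta t_i}\cdot\frac{t_j^{\lambda_j}}{2+\beta t_j}$, and invoking Remark~\ref{remP} (namely, $\phi(t^m/(2+\beta t))=\scP_m^{(\lambda)}$) yields precisely $\sum_{a,b}\gamma_{ab}^{ij}\scP_{\lambda_i+a}^{(\lambda_i)}\scP_{\lambda_j+b}^{(\lambda_j)}$, as claimed. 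The main obstacle is establishing the Schur-type Pfaffian identity with exactly those correction factors, especially the asymmetric, position-dependent exponents $2m-i-1$ and $2m-j$; tracking these cleanly through the zero-padding and the inductive Pfaffian argument, all within the formal Laurent series ring $\scL^{R,1}$ so that the cone condition on supports is preserved, is where the bulk of the technical work will concentrate.
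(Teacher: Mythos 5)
Your even case ($r=2m$) is exactly the paper's argument: the Schur--Pfaffian identity (\ref{pfid}) with the position-dependent factors $(1+\beta\bar t_i)^{2m-i-1}(1+\beta\bar t_j)^{2m-j}$, the matching-wise factorization of $\phi_1$ through the Pfaffian, and (\ref{remP}). The genuine gap is the odd case, which you dispose of by ``zero-padding'': you assert that Theorem \ref{mainPF} ``extends to $2m$ variables and still computes $[X_\lambda]$'' once one decrees $\phi(t_{2m}^{\ell})=(-\beta)^{-\ell}$. That assertion is precisely the nontrivial content of this half of the theorem, and it is not bookkeeping. The paper proves it geometrically: one adjoins a further stage $\pi_{r+1}\colon \PP(F^{0}/D_r)\to \PP(F^{\lambda_r}/D_{r-1})$ to the projective tower, checks $(\pi_{r+1})_*\bigl(c_{n-r}((D_{r+1}/D_r)^{\vee}\otimes D_r^{\perp}/U^{\perp})\bigr)=1$, and re-expresses this pushforward through the $\phi$-calculus as the extra cross factors $\prod_{i\le r}(1-\bar t_i/\bar t_{r+1})/(1+\bar t_i/t_{r+1})$. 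In the resulting Pfaffian identity the padded column carries \emph{no} factor $1/(2+\beta t_{r+1})$ (this is the $[2]_{r+1}=1$), only non-positive powers of $t_{r+1}$ occur (hence $\gamma^{i,2m}_{ab}=0$ for $b>0$), and the honest $\phi$ gives $\phi(t_{r+1}^{b})=\scS_b((U^{\perp}-E/F^{0})^{\vee})=(-\beta)^{-b}$ for $b\le 0$, which is where the convention $\scP^{(0)}_b=(-\beta)^{-b}$ comes from.

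Your padding, by contrast, keeps the factor $t_{2m}^{0}/(2+\beta t_{2m})$ and is not even well defined. The ``short computation'' $\phi(t^{m}/(2+\beta t))=(-\beta)^{-m}$ amounts to $\tfrac12\sum_{s\ge 0}2^{-s}=1$, a divergent series in $\ZZ[1/2]$ (the coefficient ring is not $2$-adically complete); more generally, since your rule makes $\phi(t_{2m}^{s})\neq 0$ for every $s>0$, the termwise image of each padded entry is an infinite sum of nonzero terms in a fixed degree. Nor can the extension be justified by substitution: the only specialization compatible with $t^{\ell}\mapsto(-\beta)^{-\ell}$ is $t_{2m}\mapsto -1/\beta$, under which each cross factor becomes $(1+\beta t_i)/(1+2\beta t_i)\neq 1$, so ``the padded factors act as the identity'' is a delicate statement about the cone expansion in $\scL^{R}$ and the map $\phi$, not a formality. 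To close the gap you need either the paper's extra projective-bundle stage (with the pushforward computation above), or an honest proof inside $\scL^{R}$ that $\phi_{r+1}$ of the padded cross factors equals $1$, formulated without the $1/(2+\beta t_{2m})$ factor, matching the modified identity with $[2]_{r+1}=1$ rather than the uniform identity (\ref{pfid}) you invoke.
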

\begin{proof}
If $r$ is even, the equality follows from the identity of Schur-Pfaffian
\begin{align}
&\prod_{i=1}^{2m}\frac{t_i^{\lambda_i}}{2+\beta t_i} \prod_{1\leq i<j\leq {2m} } \frac{1-\bar t_i/\bar t_j}{1+\bar t_i/ t_j} \notag\\
&=\Pf \left( \frac{t_i^{\lambda_i}  t_j^{\lambda_j}  \left(1 + \beta \bar t_i\right)^{2m-i-1}  \left(1 + \beta \bar t_j\right)^{2m-j} }{(2+\beta t_i)(2+\beta t_j)} \frac{1 - \bar  t_i/\bar t_j}{1 - t_i/\bar  t_j }\right)_{1 \leq i < j \leq 2m},  \label{pfid}
\end{align}
which is similar to \cite[Lemma 14]{HIMN}. Indeed, we can write
\[
[X_{\lambda}] = \Pf  \left( \phi_1\left(\frac{t_i^{\lambda_i}  t_j^{\lambda_j}  \left(1 + \beta \bar t_i\right)^{2m-i-1}  \left(1 + \beta \bar t_j\right)^{2m-j} }{(2+\beta t_i)(2+\beta t_j)} \frac{1 - \bar  t_i/\bar t_j}{1 - t_i/\bar  t_j }\right)_{1 \leq i < j \leq 2m}\right).
\]
Now (\ref{remP}) implies the claim.

To see the case when $r$ is odd, we add one more stage to the projective tower, namely $\pi_{r+1}: \PP(F^0/D_r) \to \PP(F^{\lambda_r}/D_{r-1})$. If we applying (\ref{push of tensor}), by a direct computation, we obtain (\textit{cf.} \cite[Section 5.3]{HIMN})
\[
(\pi_{r+1})_*(c_{n-r}((D_{r+1}/D_{r})^{\vee}\otimes D_{r}^{\perp}/U^{\perp})) = 1,
\]
where $D_{r+1}/D_r$ is the tautological line bundle of $\PP(F^0/D_r)$. Therefore
\[
[X_{\lambda}] = \pi_*\circ(\pi_{r+1})_* \left(c_{n-r}((D_{r+1}/D_{r})^{\vee}\otimes D_{r}^{\perp}/U^{\perp})\prod_{i=1}^r\frac{ c_{n-i+1}((D_i/D_{i-1})^{\vee}\otimes D_{i-1}^{\perp}/U^{\perp})}{2 + \beta\tau_i}\right).
\]
Thus by noting that
\begin{equation}
(\pi_{r+1})_*(c_{n-r}((D_{r+1}/D_{r})^{\vee}\otimes D_{r}^{\perp}/U^{\perp})) = \phi_{r+1} \left(\prod_{i=1}^{r} \left(\frac{1-\bar t_i/\bar t_{r+1}}{1+\bar t_i/ t_{r+1}}\right)\right)
\end{equation}
we have 
\begin{eqnarray*}
[X_{\lambda}] = \phi_1\left(\prod_{i=1}^{r}\frac{t_i^{\lambda_i}}{2+\beta t_i} \prod_{1\leq i<j\leq r+1 } \left(\frac{1-\bar t_i/\bar t_j}{1+\bar t_i/ t_j}\right)\right).
\end{eqnarray*}
Therefore the claim follows from the next identity similar to (\ref{pfid})
\begin{align}
&\prod_{i=1}^{r}\frac{t_i^{\lambda_i}}{2+\beta t_i} \prod_{1\leq i<j\leq {r+1} } \frac{1-\bar t_i/\bar t_j}{1+\bar t_i/ t_j} \notag\\
&=\Pf \left( \frac{t_i^{\lambda_i}  t_j^{\lambda_j}  \left(1 + \beta \bar t_i\right)^{2m-i-1}  \left(1 + \beta \bar t_j\right)^{2m-j} }{[2]_i[2]_j} \frac{1 - \bar  t_i/\bar t_j}{1 - t_i/\bar  t_j }\right)_{1 \leq i < j \leq 2m},  \label{pfid}
\end{align}
where $[2]_i=\begin{cases}
2+\beta t_i & i=1,\dots, r\\
1 & i=r+1
\end{cases}$.

\end{proof}
\section{Appendix: $K$-theory of odd quadric bundles $Q(E)$}\label{appendix}\label{app}
In this section, we show Identity (\ref{rel1}) in the $K$-theory of the odd quadric bundle, which we used in the main body of this paper. As an application, we also exhibit a presentation of the $K$-theory of the quadric bundle. 

Let $E$ be a vector bundle of rank $2n+1$ over a smooth quasiprojective variety $X$ with a symmetric non-degenerate bilinear form with values in a line bundle $L$. Let $S$ be the tautological line bundle of $\PP(E)$.  The quadric bundle $Q(E) \subset \PP(E)$ is given by
\[
Q(E) = \{(\ell,x) \in \PP(E)|\ \ell \in \PP(E_x), \ell \mbox{ is isotropic }\}.
\] 
Let $U$ be a maximal isotropic subbundle of $E$. We have $U^{\perp}/U \otimes U^{\perp}/U =L$. Consider the following diagram of obvious inclusions.
\[ 
\xymatrix{
Q(E) \ar[rrr] &&& \PP(E) \\
Q(E) \cap \PP(U^{\perp}) \ar[rrr]\ar[u]_{\iota'}&&&\PP(U^{\perp}) \ar[u]\\
\PP(U)\ar@/_1pc/[urrr]\ar[u]_{\iota}&&&
}
\]
\begin{lem}\label{cor1}
In $\CK^*(Q(E))$, we have
\begin{equation}\label{rel1}
(2+\beta c _1(S^{\vee}\otimes U^{\perp}/ U)) [\PP(U)] = c_n(S^{\vee}\otimes E/U^{\perp}).
\end{equation}
\end{lem}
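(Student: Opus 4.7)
The plan is to derive (\ref{rel1}) by transferring a Chern-class identity from the ambient projective bundle $\PP(E)$ down to $Q(E)$ via an excess intersection argument. Inside $\PP(E)$, the section $S \hookrightarrow E \twoheadrightarrow E/U$ of the rank-$(n+1)$ bundle $V := S^\vee \otimes E/U$ is regular, with zero scheme $\PP(U)$; hence $[\PP(U)]_{\PP(E)} = c_{n+1}(V)$. Writing $M := S^\vee \otimes U^\perp/U$ and $N := S^\vee \otimes E/U^\perp$, the exact sequence $0 \to U^\perp/U \to E/U \to E/U^\perp \to 0$ becomes $0 \to M \to V \to N \to 0$ after tensoring with $S^\vee$. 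Whitney's formula, together with $\rk M = 1$ and $\rk N = n$, then forces $c_{n+1}(V) = c_1(M)\cdot c_n(N)$, so $[\PP(U)]_{\PP(E)} = c_1(M)\cdot c_n(N)$. Because the nondegenerate form induces $(U^\perp/U)^{\otimes 2} \cong L$, the defining quadric is a regular section of $S^{\vee 2} \otimes L \cong M^{\otimes 2}$, whence $[Q(E)]_{\PP(E)} = c_1(M^{\otimes 2}) = (2 + \beta c_1(M))\cdot c_1(M)$ via the formal group law.

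Next I pull the identity $[\PP(U)]_{\PP(E)} = c_1(M)\cdot c_n(N)$ back along $j : Q(E) \hookrightarrow \PP(E)$. For the chain $\PP(U) \hookrightarrow Q(E) \hookrightarrow \PP(E)$ of codimensions $n$ and $1$, the scheme-theoretic intersection $\PP(U) \cap Q(E)$ is $\PP(U)$ itself, and the associated excess bundle is $N_{Q(E)/\PP(E)}|_{\PP(U)} = M^{\otimes 2}|_{\PP(U)}$, of rank $1$. The excess intersection formula combined with the projection formula gives
\[
j^*[\PP(U)]_{\PP(E)} \;=\; c_1(M^{\otimes 2})\cdot [\PP(U)]_{Q(E)}.
\]
Since $c_1(M)$ and $c_n(N)$ are both pulled back from $\PP(E)$, applying $j^*$ to the right-hand side of $[\PP(U)]_{\PP(E)} = c_1(M) c_n(N)$ yields
\[
c_1(M^{\otimes 2})\cdot [\PP(U)]_{Q(E)} \;=\; c_1(M)\cdot c_n(N) \quad\text{in } \CK^*(Q(E)).
\]
Factoring $c_1(M^{\otimes 2}) = c_1(M)\,(2 + \beta c_1(M))$ via the formal group law rewrites this as $c_1(M)\cdot\bigl((2+\beta c_1(M))[\PP(U)]_{Q(E)} - c_n(N)\bigr) = 0$.

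The identity (\ref{rel1}) then follows as soon as $c_1(M)$ is a non-zero-divisor in $\CK^*(Q(E))$, and I expect this to be the main obstacle. Since $c_1(M) = c_1(S^\vee) \oplus c_1(U^\perp/U)$ is essentially the hyperplane class of $Q(E)$ (twisted by a class pulled back from $X$), one should be able to cancel $c_1(M)$ by invoking a free-module structure on $\CK^*(Q(E))$ over $\CK^*(X)$ coming from a cellular / Schubert decomposition of the odd quadric bundle. Such a module description is in any case needed for the presentation of $\CK^*(Q(E))$ that the appendix announces, so the non-zero-divisor assertion is naturally handled in tandem with that structural result.
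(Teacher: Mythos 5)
Your reduction is fine up to the last step: the zero-scheme computation $[\PP(U)]_{\PP(E)}=c_{n+1}(S^\vee\otimes E/U)$, the Whitney factorization $c_{n+1}(V)=c_1(M)c_n(N)$, the identification $N_{Q(E)/\PP(E)}\cong M^{\otimes 2}$ and the excess intersection formula do give the true relation $c_1(M)\bigl((2+\beta c_1(M))[\PP(U)]-c_n(S^\vee\otimes E/U^\perp)\bigr)=0$ in $\CK^*(Q(E))$. The genuine gap is the cancellation: $c_1(M)=c_1(S^\vee\otimes U^\perp/U)$ is \emph{not} a non-zero-divisor in $\CK^*(Q(E))$, and the free $\CK^*(X)$-module structure with basis $1,h,\dots,h^{n-1},f,hf,\dots,h^{n-1}f$ cannot make it one. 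Already for $X=\pt$: multiplication by $c_1(M)=h$ annihilates the nonzero top class $h^{n-1}f$, since the product would lie in $\CK^{2n}$ of the $(2n-1)$-dimensional quadric, which vanishes (equivalently, $h^nf=2f^2=0$ already in $\CH^*$). Worse, for $n=1$ the quadric is $\PP^1$ and every class of degree $\geq 1$ is killed by $c_1(M)$, so your relation is vacuous and says nothing about the degree-one element $D=(2+\beta c_1(M))[\PP(U)]-c_n(S^\vee\otimes E/U^\perp)$. Note also that you cannot appeal to the ring presentation in the appendix, since that theorem takes (\ref{rel1}) as one of its defining relations; only the additive basis is independently available, and, as just seen, it does not give injectivity of multiplication by $c_1(M)$.

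The paper avoids any division by evaluating the class of the non-reduced scheme $Q(E)\cap\PP(U^{\perp})$ inside $\CK^*(Q(E))$ in two independent ways: (i) $Q(E)\cap\PP(U^{\perp})$ is the strict normal crossing (double) divisor $2\,\PP(U)$ in $\PP(U^{\perp})$, so by \cite[Section 7.2.1]{LevineMorel} its fundamental class is $\iota_*(2+\beta c_1(S^\vee\otimes U^{\perp}/U))$, which pushes into $Q(E)$ as $(2+\beta c_1(S^\vee\otimes U^{\perp}/U))[\PP(U)]$ by the projection formula; (ii) the same scheme is the codimension-$n$ vanishing locus of $S\to E/U^{\perp}$ in $Q(E)$, so its class is $c_n(S^\vee\otimes E/U^{\perp})$ by \cite[Lemma 6.6.7]{LevineMorel}. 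Equating (i) and (ii) is the lemma. To repair your argument you would have to prove $D=0$ directly rather than $c_1(M)D=0$, which in effect forces you back to a computation of the non-reduced intersection class as in (i)--(ii); the detour through $\PP(E)$ and the excess formula buys nothing here because it reintroduces exactly the factor $c_1(M)$ that cannot be cancelled.
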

\begin{proof}
We show the identity by computing the class $[Q(E) \cap \PP(U^{\perp})]$ in $\CK^*(Q(E))$ in two different ways. $\PP(U)$ is a divisor in $\PP(U^{\perp})$ and the corresponding line bundle over $\PP(U^{\perp})$ is $S^{\vee}\otimes U^{\perp}/ U$. The class $[\PP(U)]$ in $\CK^*(\PP(U^{\perp}))$ is equal to $c _1(S^{\vee}\otimes U^{\perp}/ U)$. The scheme theoretic intersection $Q(E)\cap \PP(U^\perp)$ is not reduced and defines the Weil divisor $2 \PP(U)$ on $\PP(U^{\perp})$, which is obviously a strict normal crossing.
Thus, following \cite[Section 7.2.1]{LevineMorel}, we can compute the fundamental class $1_{Q(E) \cap \PP(U^{\perp})}$ in $\CK^*({Q(E) \cap \PP(U^{\perp})})$ as
\begin{equation}
1_{Q(E) \cap \PP(U^{\perp})} = \iota_*(2 + \beta c _1(S^{\vee}\otimes U^{\perp}/U)).
\end{equation}
In $\CK^*(Q(E))$, the fundamental class $[Q(E) \cap \PP(U^{\perp})]$ is defined as the pushforward $\iota_*'1_{Q(E) \cap \PP(U^{\perp})}$. Since the class $2 + \beta c _1(S^{\vee}\otimes U^{\perp}/U)$ pulls back from $Q(E)$, the projection formula applied to $\iota'\circ \iota$ implies 
\begin{eqnarray*}
[Q(E) \cap \PP(U^{\perp})]
&=& (2 + \beta c _1(S^{\vee}\otimes U^{\perp}/U))\cdot [\PP(U)].
\end{eqnarray*}
On the other hand, the scheme $Q(E)\cap \PP(U^{\perp})$ is the locus where the obvious bundle map $S \to E/U^{\perp}$ has rank zero, and its codimension in $Q(E)$ is $n$. Thus, by \cite[Lemma 6.6.7]{LevineMorel}, we have
\[
[Q(E)\cap \PP(U^{\perp})]=c_n(S^{\vee}\otimes E/U^{\perp})
\]
in $\CK^*(Q(E))$. 
\end{proof}
\begin{rem}
It is easy to generalize this to the algebraic cobordism of $Q(E)$. Indeed, $2+\beta c _1(S^{\vee}\otimes U^{\perp}/ U)$ in the above formula is nothing but $F_1^{(2)}(c _1(S^{\vee}\otimes U^{\perp}/ U))$ in the notation of \cite[Section 3.1.2]{LevineMorel}.
\end{rem}
\begin{thm}[\textit{cf.} \cite{BuchSamuel}]
We have
\[
\CK^*(Q(E)) \cong \CK^*(X)[h,f]/I,
\] 
where the ideal $I$ is generated by the relations (\ref{rel1}) and 
\begin{equation}\label{rel2}
f^2 = c_n(S^{\vee} \otimes E/U-S^{\vee}\otimes S^{\vee}\otimes L))f.
\end{equation}
\end{thm}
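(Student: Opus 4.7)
The plan is to define a $\CK^*(X)$-algebra homomorphism $\Phi\colon \CK^*(X)[h,f]/I \to \CK^*(Q(E))$ by sending $h \mapsto c_1(S^\vee)$ and $f \mapsto [\PP(U)]$, and then to prove $\Phi$ is an isomorphism.

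First I would verify that both defining relations hold in $\CK^*(Q(E))$, so that $\Phi$ is well-defined. Relation (\ref{rel1}) is exactly Lemma \ref{cor1}. For (\ref{rel2}), I would compute the self-intersection $[\PP(U)]^2$ via excess intersection: writing $\iota\colon \PP(U) \inc Q(E)$, the self-intersection formula combined with the projection formula gives $[\PP(U)]^2 = \iota_*(c_n(N_{\PP(U)/Q(E)}))$. The normal bundle sits in a short exact sequence
\[
0 \to N_{\PP(U)/Q(E)} \to N_{\PP(U)/\PP(E)} \to N_{Q(E)/\PP(E)}|_{\PP(U)} \to 0,
\]
where the Euler sequence identifies $N_{\PP(U)/\PP(E)}$ with $S^\vee\otimes E/U|_{\PP(U)}$, while the fact that $Q(E)$ is the vanishing locus of the tautological quadratic form (a section of $(S^\vee)^{\otimes 2}\otimes \pi^*L$) gives $N_{Q(E)/\PP(E)} = S^\vee\otimes S^\vee\otimes L$. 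Consequently $c_n(N_{\PP(U)/Q(E)}) = \iota^* c_n(S^\vee\otimes E/U - S^\vee\otimes S^\vee\otimes L)$, and one more application of the projection formula yields relation (\ref{rel2}).

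Next I would establish surjectivity of $\Phi$ by showing that $h$ and $f$ generate $\CK^*(Q(E))$ as a $\CK^*(X)$-algebra. A Leray-Hirsch style argument, using a Zariski local trivialization of $E$, reduces this to the well-known generation of $\CK^*(Q^{2n-1})$ over $\CK^*(\pt)$ by the hyperplane class and the class of a maximal linear subspace, which follows from the Schubert cell decomposition of the odd quadric.

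For injectivity I would compare ranks as $\CK^*(X)$-modules. Relation (\ref{rel1}) expresses $h^n$ (the leading term of $c_n(S^\vee\otimes E/U^\perp)$) modulo $f$ and lower powers of $h$, and multiplying (\ref{rel1}) by $f$ and invoking (\ref{rel2}) reduces $h^k f$ for $k\geq n$. Hence $\CK^*(X)[h,f]/I$ is generated as a $\CK^*(X)$-module by the $2n$ elements $\{h^i,\, h^i f\mid 0\le i\le n-1\}$. Since $\CK^*(Q(E))$ is itself free of rank $2n$ over $\CK^*(X)$ and the images of these $2n$ elements can be checked fiberwise to form a basis (reducing to the Schubert cell basis of $\CK^*(Q^{2n-1})$), $\Phi$ is an isomorphism. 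The main obstacle lies precisely in this rank comparison: because $2$ is not invertible in $\CK^*(X)$, one cannot use (\ref{rel1}) to solve for $f$ as a polynomial in $h$, and one must carefully track how (\ref{rel1}) and (\ref{rel2}) interact to confirm that exactly $2n$ generators suffice and no hidden relations are lurking.
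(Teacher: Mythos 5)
Your proposal is correct and follows essentially the same route as the paper: verify (\ref{rel1}) via Lemma \ref{cor1} and (\ref{rel2}) via the self-intersection formula together with the normal bundle sequence $0 \to N_{\PP(U)}Q(E) \to S^{\vee}\otimes E/U \to S^{\vee}\otimes S^{\vee}\otimes L \to 0$, then use the known $\CK^*(X)$-module freeness of $\CK^*(Q(E))$ on $1,h,\dots,h^{n-1},f,hf,\dots,h^{n-1}f$ (the paper gets this from the cell decomposition and the cellular decomposition property in Levine--Morel, rather than a Leray--Hirsch argument) to match generators against a basis. The only point worth making explicit, as the paper does, is that the coefficient of $h^n$ in (\ref{rel1}) is $c(E/U^{\perp};\beta)$, a unit in $\CK^*(X)$, which is what licenses your reduction of $h^n$ modulo lower-order terms and $f$.
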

\begin{proof}
It is known that $\CK^*(Q(E))$ is a free module over $\CK^*(X)$ with basis
\[
1,h,\dots,h^{n-1},   \ \ \ f,hf,\dots,fh^{n-1}, 
\]
where $f=[\PP(U)]$ and $h=c_1(S^{\vee})$. For example, it follows from the standard fact that $Q(E)$ admits a cell decomposition of $Q(E)$, combined with the cellular decomposition property \cite[Section 5.1.2]{LevineMorel}. 
The relation (\ref{rel2}) is identical to the one in \cite[Appendix (A.4)]{AndersonFulton2} for cohomology case and it also holds in $\CK^*(Q(E))$. Indeed, the self-intersection formula $f^2=\iota'_*\iota_* c_n(N_{\PP(U)}Q(E))$ holds also in connective $K$-theory (\cite{LevineMorel}) and the normal bundle $N_{\PP(U)}Q(E)$ of $\PP(U)$ in $Q(E)$ sits in the short exact sequence 
\[
0 \to N_{\PP(U)}Q(E) \to S^{\vee} \otimes E/U\to S^{\vee}\otimes S^{\vee}\otimes L \to 0.
\]
We find that (\ref{rel1}) is a polynomial equation in $h, f$ with coefficients in $\CK^*(X)$.  The top degree of $h$ in (\ref{rel1}) is $n$ and its coefficient is $c(E/U^{\perp};\beta)$ which is invertible in $\CK^*(X)$. Therefore, as we know an additive basis, the relations determine the ring structure.
\end{proof}

\textbf{Acknowledgements.} 
Part of this work developed while the first author was affiliated to  POSTECH, which he would like to thank for the excellent working conditions. He would also like to gratefully acknowledge the support of the National Research Foundation of Korea (NRF) through the grants funded by the Korea government (MSIP) (2014-001824 and 2011-0030044).
The second author  is supported by Grant-in-Aid for Scientific Research (C) 24540032, 15K04832.
The fourth author is supported by Grant-in-Aid for Scientific Research (C) 25400041.

\bibliography{references}{}
\bibliographystyle{acm}

\begin{small}
{\scshape
\noindent Thomas Hudson, Department of Mathematical Sciences, KAIST, 291 Daehak-ro, Yuseong-gu, Daejeon, 34141, Republic of Korea (South)
}
\end{small}

{\textit{email address}: \tt{hudson.t@kaist.ac.kr}}

\

\begin{small}
{\scshape
\noindent  Takeshi Ikeda, Department of Applied Mathematics, Okayama University of Science, Okayama 700-0005, Japan
}
\end{small}

{\textit{email address}: \tt{ike@xmath.ous.ac.jp}}

\

\begin{small}
{\scshape
\noindent Tomoo Matsumura, Department of Applied Mathematics, Okayama University of Science, Okayama 700-0005, Japan
}
\end{small}

{\textit{email address}: \tt{matsumur@xmath.ous.ac.jp}}

\

\begin{small}
{\scshape
\noindent Hiroshi Naruse, Graduate School of Education, University of Yamanashi, Yamanashi 400-8510, Japan
}
\end{small}

{\textit{email address}: \tt{hnaruse@yamanashi.ac.jp}}

\end{document}